\documentclass[12pt,reqno]{amsart}
\usepackage{amsmath,amssymb,amsthm,amscd,amsfonts,geometry,color}
\newtheorem{theorem}{Theorem}[section]
\newtheorem{corollary}[theorem]{Corollary}
\newtheorem{lemma}[theorem]{Lemma}
\newtheorem{proposition}[theorem]{Proposition}

\newtheorem{main}{Main Theorem}

\theoremstyle{definition}

\newcommand{\fin}{\operatorname{Fin}}             % finite sets
\newcommand{\doubl}{\operatorname{D}}             % doubletons

\newcommand{\conti}{\operatorname{C}}             % continuous maps
\newcommand{\pseudo}{\operatorname{PM}}             % pseudometrics
\newcommand{\metr}{\operatorname{M}}             % metrics
\newcommand{\adm}{\operatorname{AM}}             % admissible metrics
\newcommand{\sphm}{\operatorname{SPM}}             % pseudometrics of norm 1

\begin{document}

\title[Isometries between the unit spheres of spaces of metrics]{Isometries between the unit spheres of spaces of metrics}
\author{Katsuhisa Koshino}
\address[Katsuhisa Koshino]{Faculty of Engineering, Kanagawa University, 3-27-1 Rokkakubashi, Kanagawa-ku, Yokohama-shi, 221-8686, Japan}
\email{ft160229no@kanagawa-u.ac.jp}
\subjclass[2020]{Primary 46B04; Secondary 46E15, 54C35, 54E35}
\keywords{isometry, pseudometric, admissible metric, sup-metric, the Banach-Stone theorem, Tingley's problem}
\maketitle

\begin{abstract}
Given a topological space $Z$, let $\pseudo(Z)$ be the space of bounded continuous pseudometrics on $Z$,
 which is endowed with the sup-norm,
 and let $\sphm(Z)$ be the unit sphere of $\pseudo(Z)$.
In this paper, we shall prove that for all non-degenerate compact metrizable spaces $X$ and $Y$, and for any surjective isometry $T : \sphm(X) \to \sphm(Y)$, there exists a homeomorphism $\phi : Y \to X$ such that for any metric $d \in \sphm(X)$ and for any pair of points $(x,y) \in Y^2$, $T(d)(x,y) = d(\phi(x),\phi(y))$.
As a corollary, we can solve a variant of Tingley's problem on spaces of metics.
\end{abstract}

\section{Introduction}

Tingley's problem \cite{Ti}, which was conjectured in 1987,
 is one of the most significant questions in functional analysis.
This asks whether a surjective isometry between the unit spheres of two Banach spaces can be extended to a real-linear surjective isometry over the whole spaces, refer to \cite{Pe} and \cite[Introduction]{Hat} on the recent developments.
T.~Banakh \cite{Banak4} solved the problem for 2-dimensional Banach spaces affirmatively.
Although this question remains open in general Banach spaces,
 it has been solved for specific function spaces, for instance, spaces of continuous functions on topological spaces,
 which traces back to the result due to R.~Wang \cite{Wa}.
In this paper, we shall solve a variant of Tingley's problem on function spaces consisting of metrics on metrizable spaces.
These spaces do not admit the standard algebraic techniques applied in the ones of continuous functions.
Indeed, they lack the unit element and are not closed under multiplication by Urysohn's maps,
 which frequently play key roles in solving Tingley's Problems.
Furthermore, the multivariate nature and the symmetry of metrics prevents the simple peaking structures like the case of univariate functions.

The Banach-Stone theorem \cite{Banac,MHSto} is one of the most fundamental results in the study of isometries between function spaces (cf.~\cite{FJ} as a historical note).
Throughout the paper, an isometry means a surjective isometry.
Given a topological space $Z$, assume that $\conti(Z) = (\conti(Z),\|\cdot\|)$ is the Banach space consisting of real-valued bounded continuous functions on $Z$ with the sup-norm.
Let $\pseudo(Z) \subset \conti(Z^2)$ be the subspace of pseudometrics on $Z$,
 and let $\adm(Z) \subset \pseudo(Z)$ be the subspace of admissible metrics, that is, metrics compatible with the original topology on $Z$.
Moreover, put $\metr(Z)$ be the subspaces ${\rm Pc}(Z)$ or ${\rm Pp}(Z)$ of admissible metrics defined as follows:
\begin{itemize}
 \item For every $d \in {\rm Pc}(Z)$, there are a doubleton $\{z,w\} \subset Z$ and a compact set $K \subset Z$ such that $d(z,w) = \|d\|$ and if $d(x,y) = \|d\|$,
 then $x, y \in K$.
 \item For each $d \in {\rm Pp}(Z)$, there only exists $\{z,w\} \subset Z$ such that $d(z,w) = \|d\|$.
\end{itemize}
If $Z$ is compact,
 then $\adm(Z) = {\rm Pc}(Z)$.
The author \cite{Kos29} (cf.~\cite{Sh}) showed a Banach-Stone type theorem on spaces of metrics as follows:

\begin{theorem}
Let $X$ and $Y$ be metrizable spaces.
The following are equivalent:
\begin{enumerate}
 \item $X$ and $Y$ are homeomorphic;
 \item there exists an isometry $T : \pseudo(X) \to \pseudo(Y)$ with $T(\metr(X)) = \metr(Y)$;
 \item there exists an isometry $T : \adm(X) \to \adm(Y)$ with $T(\metr(X)) = \metr(Y)$;
 \item there exists an isometry $T : \metr(X) \to \metr(Y)$.
\end{enumerate}
In this case, for each isometry $T : \pseudo(X) \to \pseudo(Y)$ with $T(\metr(X)) = \metr(Y)$, there is a homeomorphism $\phi : Y \to X$ such that for all metrics $d \in \pseudo(X)$ and for all points $x, y \in Y$,
 $$T(d)(x,y) = d(\phi(x),\phi(y)).$$
Except for the case that the cardinality of $X$ or $Y$ is equal to $2$, the homeomorphism $\phi$ can be chosen uniquely.
\end{theorem}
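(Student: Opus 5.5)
The implications (1)$\Rightarrow$(2)$\Rightarrow$(3)$\Rightarrow$(4) are the easy direction, and the real work is (4)$\Rightarrow$(1) together with the representation formula. For (1)$\Rightarrow$(2), given a homeomorphism $\phi : Y \to X$, I would define $T(d)(x,y) = d(\phi(x),\phi(y))$. This map is a linear isometry of $\pseudo(X)$ onto $\pseudo(Y)$, with inverse given by composing with $\phi^{-1}$. It preserves admissibility, and since $\phi$ carries compact sets to compact sets and doubletons to doubletons, it preserves the defining conditions of ${\rm Pc}$ and ${\rm Pp}$. Restricting $T$ then gives (2)$\Rightarrow$(3)$\Rightarrow$(4).

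For (4)$\Rightarrow$(1), the plan is to recover the points of $X$, or rather the doubletons $\{z,w\}$, metrically inside $\metr(X)$ using only distances. Fix $d \in \metr(X)$. For a doubleton $\{z,w\}$ and $r>0$, I would consider the set of metrics $e \in \metr(X)$ that agree with $d$ off a small neighbourhood of $(z,w)$ but reach their maximum exactly at $\{z,w\}$; these exist by adding a bump at $(z,w)$ of the form $\max\{0, r - \rho(\cdot)\}$ built from an admissible metric, with the triangle inequality preserved by truncation. The key intermediate claim is that the doubleton where a metric attains its norm can be read off from isometric data. Concretely, for $d_1,d_2 \in \metr(X)$ with the same peak doubleton, $\|d_1-d_2\|$ relates to $\bigl|\|d_1\|-\|d_2\|\bigr|$ through a ``peaking'' characterization. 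I would formulate this as: $d_1,d_2$ peak at the same doubleton iff there is a third metric $e$ with $\|e-d_i\|$ attaining an extremal value. This is the Mazur--Ulam-type metric step.

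The next step is to identify sets $\mathcal{S}_{\{z,w\}}$ of metrics peaking at $\{z,w\}$, and to prove that $T$ maps each $\mathcal{S}_{\{z,w\}}$ onto some $\mathcal{S}_{\{z',w'\}}$. This yields a bijection $\Phi$ between the doubletons $\doubl(Y)$ and $\doubl(X)$. From $\Phi$ I would extract a point map $\phi$ using the fact that two doubletons share a point exactly when certain metrics peaking at them can be close. For instance, $\{z,w\}$ and $\{z,v\}$ are detected by the triangle inequality forcing $d(w,v) \le d(z,w)+d(z,v)$, which gives metric constraints absent for disjoint doubletons. When $|X| \ge 3$, this makes $\phi$ well defined and bijective; the case $|X|=2$ is special, which is exactly why uniqueness fails there.

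Continuity of $\phi$ and $\phi^{-1}$ would follow by showing that convergence of doubletons corresponds to nearness of the corresponding peaking families, so $\phi$ is a homeomorphism. Finally, to get $T(d)(x,y)=d(\phi(x),\phi(y))$ in situation (2), I would first show that $T$ is affine by Mazur--Ulam, since $\pseudo$ is a convex subset of a normed space with nonempty relative structure. Then I would evaluate $T(d)(x,y)$ as $\sup$ or $\inf$ of norms of perturbations peaking at $\{x,y\}$, which gives the value $d(\phi(x),\phi(y))$. I expect the main obstacle to be the metric characterization of peaking doubletons. Because metrics are symmetric, vanish on the diagonal, and cannot be multiplied by Urysohn functions, the bump constructions must be checked carefully against the triangle inequality. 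I would try first truncations of the form $\min\{d + \text{bump}, \|d\|+\varepsilon\}$, with bumps of shape $\max\{0,\varepsilon - \rho(x,z)-\rho(y,w)\}$ symmetrized.
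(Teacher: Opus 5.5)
The paper only quotes this theorem from the author's earlier work, but Sections~3--4 rerun that argument on the unit sphere, and against that route your proposal has genuine gaps at the load-bearing steps. The central one is that you never establish that $T$ carries the family of metrics peaking at one doubleton onto another such family. ``A third metric $e$ with $\|e-d_i\|$ attaining an extremal value'' is not a formulated criterion, let alone a proved one. In the paper, the isometric invariant is an inclusion of open balls. If $d$ is the average of $d_1,\dots,d_n$, all peaking at $\{x,y\}$, then each ball about $d_i$ (radius $1$ on the sphere) lies in the ball about $d$. $T$ preserves this inclusion, and Lemma~\ref{intersect.} shows it forces $\mathcal{M}(Y,T(d))\subset\mathcal{M}(Y,T(d_i))$. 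The tool is Lemma~\ref{decomp.}: for $\{u,v\}$ with $0<T(d_i)(u,v)<1$ one needs a pseudometric $\rho$ with $\rho(u,v)=0$, $\mathcal{M}(Y,T(d_i))\cap\mathcal{Z}(Y,\rho)=\emptyset$ and $\mathcal{M}(Y,\rho)\cap\mathcal{Z}(Y,T(d_i))=\emptyset$. It is obtained by pulling back a metric from the quotient of $Y$ by the null classes of $T(d_i)$, with the classes of $u$ and $v$ merged. The resulting finite intersection property, together with compactness of the maximal sets, gives a common peak doubleton and hence the bijection $\Phi$ (Proposition~\ref{bij.}). Outside the compact case, that compactness is exactly what the $\mathrm{Pc}$/$\mathrm{Pp}$ conditions and the requirement $T(\metr(X))=\metr(Y)$ supply. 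Your plan never uses them in the hard direction, even though for a general admissible metric on a non-compact space the norm need not be attained at all.

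Your bump construction also fails. On $[0,1]$ with $d(s,t)=|s-t|$, take your symmetrized bump at $(0,1)$ built from $\rho=d$ with $\varepsilon<\tfrac12$. It gives $e(0,1)=1+\varepsilon$ while $e(0,\tfrac12)=e(\tfrac12,1)=\tfrac12$, so the triangle inequality breaks, and truncation at $\|d\|+\varepsilon$ does not repair it. Metrics are not closed under local perturbation, which is the obstruction stressed in the Introduction. The paper instead uses only rescale-and-truncate $\min\{d/d(x,y),1\}$ (Lemma~\ref{transl.}), the extension Theorem~\ref{ext.}, and quotient pullbacks.

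Second, the Mazur--Ulam step is unavailable. $\pseudo(X)$ is not a linear space; it lies in the proper closed subspace of symmetric functions vanishing on the diagonal. It has empty interior even there once $X$ has a non-isolated point $x$: subtract a small continuous function supported near an off-diagonal pair $(x_n,x)$ with $d(x_n,x)$ tiny, and the result goes negative. So neither Mazur--Ulam nor Mankiewicz's theorem for convex bodies applies, and linearity of $T$ is an output of the theorem, not a tool. What replaces it is the doubleton-level formula $T(d)(x,y)=d(\Phi(\{x,y\}))$, proved as in Proposition~\ref{eq.}. By Lemma~\ref{transl.}, move $T(d)$ to a metric peaking at $\{x,y\}$ at sup-distance equal to its deficit there; its preimage peaks at $\Phi(\{x,y\})$; compare distances and argue symmetrically.

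This formula must come before the point map. Your triangle-inequality test for shared points is the paper's argument in Proposition~\ref{corresp.}: take $d$ with values $\tfrac13,\tfrac13,1$ on $\Phi(\{y,z_1\})$, $\Phi(\{y,z_2\})$, $\Phi(\{z_1,z_2\})$ via Theorem~\ref{ext.}. But it transfers to $Y$ only through that formula. Applied to admissible metrics, the same formula also gives continuity of $\phi$ and $\phi^{-1}$ at once, with no need for ``nearness of peaking families''.

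A smaller slip: restricting an arbitrary $T$ as in (2) to $\adm(X)$ does not give (3), since nothing guarantees $T(\adm(X))=\adm(Y)$. Prove (1)$\Rightarrow$(3) directly with the composition operator, and use restriction only for (2)$\Rightarrow$(4) and (3)$\Rightarrow$(4).
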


Denote by $\sphm(Z)$ the unit sphere of $\pseudo(Z)$.
We will prove the following theorem on isometries between the unit spheres of function spaces consisting of metrics. 

\begin{main}\label{isometry}
Let $X$ and $Y$ be non-degenerate compact metrizable spaces,
 and let $T : \sphm(X) \to \sphm(Y)$ be an isometry.
Then there exists a homeomorphism $\phi : Y \to X$ such that for any $d \in \sphm(X)$ and for any $x, y \in Y$,
\begin{equation}
 T(d)(x,y) = d(\phi(x),\phi(y)). \tag{$\ast$}
\end{equation}
Except for the case where the cardinality of $X$ or $Y$ is equal to $2$, the homeomorphism $\phi$ can be chosen uniquely.
\end{main}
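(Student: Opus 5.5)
The plan is to reduce the Main Theorem to the Banach--Stone type theorem quoted above. We show that $T$ extends to a surjective isometry of $\pseudo(X)$ onto $\pseudo(Y)$ that carries $\metr(X)=\adm(X)$ onto $\metr(Y)=\adm(Y)$ (compactness gives $\adm={\rm Pc}$). Since $\pseudo(Z)$ is only a cone, "extension" means the positively homogeneous map $\tilde T(td)=tT(d)$ for $t\ge 0$ and $d\in\sphm(X)$. The work is to show that $T$ preserves enough structure for $\tilde T$ to be an isometry and to preserve metrics.

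\textbf{Step 1: metrics are recognisable inside $\sphm(X)$.} We characterise $\sphm(X)\cap\adm(X)$ metrically.

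For a pseudometric $p$ on $X$ with $\|p\|=1$, consider the set $\{q\in\sphm(X): \|p-q\|=2\}$ and, more usefully, the "maximal face" containing $p$.

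A pseudometric vanishes on the diagonal and is nonnegative. Hence $\|p-q\|\le 1$ for all $p,q\in\sphm(X)$, so distance $2$ never occurs. Instead we use distance $1$. We have $\|p-q\|=1$ iff some pair $(x,y)$ has $p(x,y)=1$ and $q(x,y)=0$, or the symmetric condition holds. Let
\[ A(p)=\{q\in\sphm(X):\|p-q\|=1\}. \]

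Claim: $p$ is an admissible metric iff $A(p)$ is, in a suitable sense, as small as possible. More precisely, $A(d)\subseteq A(p)$ should hold roughly when $p$ "degenerates more" than $d$.

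For compact $X$, an admissible $d$ with $d(x,y)<1$ everywhere off the maximum set is far from pseudometrics vanishing on the pairs where $d$ is large. Using pseudometrics of the form $q(x,y)=|f(x)-f(y)|$ with Urysohn-type $f$ (these are pseudometrics, unlike products of metrics), we can encode doubletons $\{x,y\}$. Define, for $x\ne y$, the set $S_{x,y}=\{p\in\sphm(X):p(x,y)=1\}$; these are the maximal convex subsets of $\sphm(X)$ (maximal faces). A convex subset of the sphere has a common peak pair by compactness and a finite-intersection argument. Maximal convex sets are preserved by isometries; this is standard Tingley machinery, via Mankiewicz/Cheng--Dong type lemmas applied inside the cone. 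Hence $T$ induces a bijection $\{x,y\}\mapsto\{x',y'\}$ of doubletons, using non-degeneracy and the test pseudometrics $|f(x)-f(y)|$ to separate faces.

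\textbf{Step 2: values are preserved.} Next show $T(p)(x',y')=p(x,y)$ for corresponding doubletons.

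Use the distance from $p$ to the face "opposite" $\{x,y\}$. For suitable test elements $q$ with $q(x,y)=1$ and $q$ small elsewhere, built from $|f(\cdot)-f(\cdot)|$ peaking at $x,y$, we get $\|p-q\|=1-p(x,y)$ once $p\le$ stuff elsewhere. Equivalently,
\[ \operatorname{dist}(p,S_{x,y})=1-p(x,y), \]
which should follow since $\max(p,q)$-type modifications of pseudometrics stay pseudometrics after truncation. This identity is purely metric, so $T$ preserves it.

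Then show the doubleton bijection is induced by a point bijection $\phi:Y\to X$. Use triples: three doubletons sharing a point are detected by triangle-inequality-saturated configurations, i.e.\ which value patterns are realisable. Continuity of $\phi$ follows from continuity of $T(d)$ for a fixed admissible $d$. Thus $T(d)(x,y)=d(\phi(x),\phi(y))$, and $\phi$ is a homeomorphism because both $d$ and $T(d)$ are admissible.

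Alternatively, once Step 2 gives value preservation, $\tilde T$ is a norm-preserving isometry of $\pseudo(X)$ onto $\pseudo(Y)$ preserving metrics, and the quoted theorem yields $\phi$ together with its uniqueness when $|X|,|Y|\neq 2$.

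\textbf{Main obstacle.} The hardest step is the distance identity $\operatorname{dist}(p,S_{x,y})=1-p(x,y)$ and the identification of maximal faces. The cone lacks linear structure, so the usual facial arguments must be redone with explicit pseudometric constructions such as $\min(1,\,p+c|f(\cdot)-f(\cdot)|)$. Our first attempt would be exactly this kind of truncated sum; it remains a pseudometric and achieves the lower bound.
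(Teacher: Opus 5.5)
There is a genuine gap in Step~1, at the claim that $T$ maps each face $S_{x,y}$ onto a face $S_{x',y'}$. You justify it only by ``standard Tingley machinery, via Mankiewicz/Cheng--Dong type lemmas applied inside the cone'', but none of that machinery applies here.

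Mankiewicz's theorem concerns isometries between convex bodies or open connected sets, not spheres. The Cheng--Dong/Tanaka theorem, that isometries between unit spheres of Banach spaces preserve maximal convex subsets, relies on antipodes and on distance $2$. For example, a maximal convex $C\subset S_E$ equals $\{x\in S_E:\|x-y\|=2\ \text{for all}\ y\in -C\}$. In $\sphm(X)$ there is no $-p$, every distance is at most $1$, and $\|p-q\|=1$ occurs for $p,q$ inside a single face as well as across faces. So no such metric description of $S_{x,y}$ is available.

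Since $T$ is not known to be affine, the convexity of $S_{x,y}$ says nothing about $T(S_{x,y})$. This remains true even given your correct observation that the $S_{x,y}$ are exactly the maximal convex subsets. Everything afterwards depends on this unproved step: transporting $\operatorname{dist}(p,S_{x,y})$, value preservation, and the extension $\tilde T$.

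The missing idea is the paper's main new ingredient (Lemmas~\ref{decomp.} and \ref{intersect.}): a cone-specific substitute using open balls of radius $1$.
\begin{itemize}
\item Take $d_1,\dots,d_n\in\mathcal{P}(X,\{x,y\})$ with average $d\in\mathcal{P}(X,\{x,y\})$. Then $\{\rho:\|\rho-d_i\|<1\}\subset\{\rho:\|\rho-d\|<1\}$, a purely metric relation that $T$ preserves.
\item This inclusion forces $\mathcal{M}(Y,T(d))\subset\mathcal{M}(Y,T(d_i))$. Suppose $\{u,v\}$ lies in the difference. Then $0<T(d_i)(u,v)<1$; the value $0$ is excluded because $\|T(d)-T(d_i)\|=\|d-d_i\|<1$.
\item Lemma~\ref{decomp.} supplies $\rho$ with $\rho(u,v)=0$. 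It is the pull-back of an admissible metric on the quotient of $Y$ that collapses the $T(d_i)$-null classes and merges those of $u$ and $v$.
\item This $\rho$ vanishes nowhere on $\mathcal{M}(Y,T(d_i))$ and equals $1$ nowhere on $\mathcal{Z}(Y,T(d_i))$. By compactness $\|\rho-T(d_i)\|<1$, yet $\|\rho-T(d)\|\ge 1$, a contradiction.
\item This gives the finite intersection property, and compactness of $\fin_2(Y)$ then gives a common peak doubleton. Applying the same argument to $T^{-1}$, together with your separation of faces, yields the bijection $\Phi$ of Proposition~\ref{bij.}.
\end{itemize}

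By contrast, what you call the main obstacle is routine. Take continuous $f:X\to[0,1]$ with $f(x)=0$ and $f(y)=1$. Your truncated sum $q=\min\{1,\,p+(1-p(x,y))|f(\cdot)-f(\cdot)|\}$ already lies in $S_{x,y}$ with $\|q-p\|=1-p(x,y)$. The paper's Lemma~\ref{transl.} uses $\min\{p/p(x,y),1\}$ instead.

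Once faces and values are preserved, your shortcut is legitimate and can replace Proposition~\ref{corresp.}. You extend positively homogeneously and invoke the quoted Theorem~1.1 with $\metr=\adm$. Metrics go to metrics because $\Phi$ is a bijection of doubletons, and continuous metrics on compacta are admissible. Your characterisation of admissible metrics through $A(p)$ is unsupported and unnecessary.
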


As a corollary, we can provide a positive answer to a variant of Tingley's problem for spaces of metics.

\begin{corollary}\label{Tingley}
Suppose that $X$ and $Y$ are non-degenerate compact metrizable spaces,
 and that $T : \sphm(X) \to \sphm(Y)$ or $T : \sphm(X) \cap \adm(X) \to \sphm(Y) \cap \adm(Y)$ is an isometry.
Then there uniquely exists an extension $\overline{T} : \pseudo(X) \to \pseudo(Y)$ of $T$,
 which is a real-linear\footnote{More precisely, for all pseudometrics $d, \rho \in \pseudo(X)$ and for all numbers $\alpha, \beta \geq 0$, $\overline{T}(\alpha d + \beta \rho) = \alpha\overline{T}(d) + \beta\overline{T}(\rho)$.} isometry.
\end{corollary}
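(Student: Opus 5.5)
The statement to prove is Corollary~\ref{Tingley}. The plan is to reduce it to the Main Theorem~\ref{isometry} and then define the extension explicitly by composition with the homeomorphism. There are two cases, according to whether $T$ is defined on $\sphm(X)$ or only on $\sphm(X)\cap\adm(X)$.

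\textbf{Case $T : \sphm(X) \to \sphm(Y)$.} The Main Theorem~\ref{isometry} supplies a homeomorphism $\phi : Y \to X$ satisfying $(\ast)$ for every $d \in \sphm(X)$. Define
$$\overline{T}(d)(x,y) = d(\phi(x),\phi(y)), \qquad d \in \pseudo(X),\ x,y \in Y.$$
This map has the required properties for the following reasons.
\begin{itemize}
 \item Since $\phi$ is continuous, $\overline{T}(d)$ is a bounded continuous pseudometric on $Y$.
 \item Since $\phi$ is a bijection, the sup-norm is preserved, so $\|\overline{T}(d)-\overline{T}(\rho)\| = \|d-\rho\|$.
 \item The map $\rho \mapsto \rho(\phi^{-1}(\cdot),\phi^{-1}(\cdot))$ is an inverse, so $\overline{T}$ is surjective.
 \item $\overline{T}$ is clearly additive and positively homogeneous, which is the linearity required in the footnote.
 \item By $(\ast)$, $\overline{T}$ restricts to $T$ on $\sphm(X)$.
\end{itemize}
For uniqueness, let $S$ be any other such extension. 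Every nonzero $d \in \pseudo(X)$ equals $\|d\|\cdot(d/\|d\|)$ with $d/\|d\| \in \sphm(X)$. Positive homogeneity therefore forces $S(d) = \|d\|T(d/\|d\|) = \overline{T}(d)$. Also $S(0)=0$, since $S(0) = S(0\cdot d) = 0\cdot S(d) = 0$.

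\textbf{Case $T : \sphm(X)\cap\adm(X) \to \sphm(Y)\cap\adm(Y)$.} Here I would first extend $T$ to an isometry of the full spheres and then apply the first case.
\begin{itemize}
 \item \emph{Density.} Since $X$ is compact metrizable, $\adm(X)$ is dense in $\pseudo(X)$. Given $d \in \sphm(X)$ and an admissible metric $\rho$ with $\|\rho\| \le 1$, the metric $(d+\varepsilon\rho)/\|d+\varepsilon\rho\|$ is admissible, lies in the sphere, and tends to $d$ as $\varepsilon \to 0$. Hence $\sphm(X)\cap\adm(X)$ is dense in $\sphm(X)$.
 \item \emph{Extension.} An isometry is uniformly continuous, so $T$ extends uniquely to an isometry from $\sphm(X)$ onto the closure of its image. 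That closure is $\sphm(Y)$, because $\sphm(Y)$ is complete, being closed in the Banach space $\conti(Y^2)$, and the same density holds on the $Y$ side.
 \item \emph{Surjectivity.} The extension of $T^{-1}$ provides the inverse, so the extended map is a surjective isometry $\sphm(X)\to\sphm(Y)$.
\end{itemize}
The first case now produces $\overline{T}$. Uniqueness follows as before, since positive homogeneity and continuity determine $\overline{T}$ from its values on a dense subset of the sphere.

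\textbf{Expected obstacle.} All the substantive work is contained in the Main Theorem. Within the corollary itself, the only step needing care is the density claim in the second case. It relies on two facts: $d+\varepsilon\rho$ is an admissible metric whenever $\rho$ is admissible and $d$ is a continuous pseudometric, and $\|d+\varepsilon\rho\|\to 1$ as $\varepsilon\to 0$. Both are routine on a compact space.
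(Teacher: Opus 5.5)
Your proposal is correct and follows the paper's proof. You extend $T$ from the dense set $\sphm(X)\cap\adm(X)$ to the complete sphere, apply the Main Theorem, set $\overline{T}(d)=d\circ(\phi\times\phi)$, and get uniqueness from positive homogeneity plus continuity; the only differences are minor: you prove density with the renormalized metric $(d+\varepsilon\rho)/\|d+\varepsilon\rho\|$ where the paper (Proposition~\ref{dense}) uses $\max\{d,d_Z\}$ with $d_Z$ of small norm, and you check directly that $\overline{T}$ is an isometry instead of citing the earlier Banach--Stone paper.
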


\section{Preliminaries}

In this section, we introduce some basic properties of spaces of metrics,
 which will be used later.
The readers can refer to \cite{Kos20,Kos26,IK2} for the topological and metric structures of spaces consisting of metrics.
On isometries between spaces of metrics, the author \cite{Kos30} proved a variant of the Banach-Stone theorem on spaces consisting of uniformly continuous and lipschitz continuous metrics besides \cite{Kos29}.
For a metrizable space $Z$, $\adm(Z)$ is dense in $\pseudo(Z)$ (cf.~\cite[Proposition~5]{Kos20}).
Similarly, we have the following:

\begin{proposition}\label{dense}
For every metrizable space $Z$, $\sphm(Z) \cap \adm(Z)$ is dense in $\sphm(Z)$.
\end{proposition}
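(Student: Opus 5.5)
We derive this from the density of $\adm(Z)$ in $\pseudo(Z)$ (cf.~\cite[Proposition~5]{Kos20}), by a perturbation followed by a normalization. Fix $d \in \sphm(Z)$ and $\epsilon \in (0,1)$. We need some $\rho \in \sphm(Z) \cap \adm(Z)$ with $\|d - \rho\| < \epsilon$.

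By \cite[Proposition~5]{Kos20}, choose $\rho_0 \in \adm(Z)$ with $\|d - \rho_0\| < \epsilon/3$. Then $\bigl|\|\rho_0\| - 1\bigr| \le \|d-\rho_0\| < \epsilon/3$, so $\|\rho_0\| > 0$. Put $\rho = \rho_0/\|\rho_0\|$. Multiplying by a positive constant keeps a metric a metric and does not change the topology it induces, so $\rho \in \adm(Z)$. Clearly $\|\rho\| = 1$. Hence $\rho \in \sphm(Z) \cap \adm(Z)$.

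For the distance estimate,
$$\|d - \rho\| \le \|d - \rho_0\| + \|\rho_0 - \rho\| = \|d-\rho_0\| + \bigl|\|\rho_0\| - 1\bigr| < 2\epsilon/3 < \epsilon.$$
Since $d$ and $\epsilon$ were arbitrary, $\sphm(Z) \cap \adm(Z)$ is dense in $\sphm(Z)$.

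The only input that needs checking is that the cited density result applies to bounded continuous pseudometrics under the sup-norm. That is exactly the setting of $\pseudo(Z)$ here, so the cited result can be used as stated.

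If one wants to avoid the citation, a direct construction also works. Take a bounded admissible metric $e$ on $Z$ with $\|e\| \le 1$, which exists because $Z$ is metrizable (replace any admissible metric $e'$ by $\min\{e',1\}$). Then set $\rho_0 = d + (\epsilon/3)e$. This is a continuous metric because $e$ is a metric. It is admissible: the topology it induces is finer than that of $e$, because $\rho_0 \ge (\epsilon/3)e$, and coarser than the original topology, because $\rho_0$ is continuous. Finally $\|d - \rho_0\| \le \epsilon/3$, and normalizing as above finishes the argument.
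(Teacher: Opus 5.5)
Your proof is correct, and it differs from the paper's in how the norm is kept equal to $1$.

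The paper argues directly. It fixes an admissible metric $d_Z$ with $\|d_Z\| \le \epsilon$ and sets $\rho = \max\{d, d_Z\}$. (The text says $d_Z \in \sphm(Z)$, which is a slip, and writes $d_X$ for $d_Z$ in the formula.) Because $\|d_Z\| \le \epsilon \le 1 = \|d\|$, the maximum has norm exactly $1$. It also satisfies $0 \le \rho - d \le d_Z$, so $\|d - \rho\| \le \epsilon$ with no renormalization. Admissibility of $\rho$ follows from the same sandwich argument you give: domination of $d_Z$ gives a finer topology, and continuity gives a coarser one.

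Your main route instead reduces to the density of $\adm(Z)$ in $\pseudo(Z)$ and then projects radially onto the sphere. It uses that $\adm(Z)$ is closed under positive scalars and that $\bigl\|\rho_0 - \rho_0/\|\rho_0\|\bigr\| = \bigl|\|\rho_0\| - 1\bigr|$. This is a general principle: density of a positively homogeneous subset passes to its unit sphere, at the cost of a constant factor in $\epsilon$.

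Your citation-free variant is the paper's construction with the sum $d + (\epsilon/3)e$ in place of the maximum. The sum pushes the norm above $1$, which is why you need the normalization step. The lattice operation $\max$ leaves the norm unchanged and avoids it.
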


\begin{proof}
For each pseudometric $d \in \sphm(Z)$ and each positive number $\epsilon \in (0,1]$, we will find an admissible metric $\rho \in \sphm(Z) \cap \adm(Z)$ with $\|d - \rho\| \leq \epsilon$.
Fix any admissible metric $d_Z \in \sphm(Z)$ so that $d_Z(x,y) \leq \epsilon$ for all $x, y \in Z$.
Define an admissible metric $\rho \in \sphm(Z) \cap \adm(Z)$ by
 $$\rho(x,y) = \max\{d(x,y),d_X(x,y)\}$$
 for each pair $(x,y) \in Z^2$,
 which is the desired.
\end{proof}

We can obtain the pseudometric version of the Hausdorff metric extension theorem \cite{Hau} as follows (cf.~\cite{Ish8}):

\begin{theorem}\label{ext.}
Let $A$ be a closed subset of a metrizable space $Z$.
For every $d \in \pseudo(A)$, there exists an extension $\tilde{d} \in \pseudo(Z)$ of $d$ with$\|\tilde{d}\| = \|d\|$.
\end{theorem}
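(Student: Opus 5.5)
We prove the statement (Theorem~\ref{ext.}) by first extending $d$ to a continuous pseudometric of the same norm with no boundedness issue, namely by reducing to the Hausdorff metric extension theorem \cite{Hau} for metrics. Set $r=\|d\|$; if $r=0$ take $\tilde d=0$. Otherwise we turn $d$ into a metric on $A$ by passing to a metric space where this theorem applies. Let $A/d$ be the metric quotient of $A$ obtained by identifying points at $d$-distance $0$, with quotient map $q : A \to A/d$, which is continuous. The plan is to embed the situation into a metrizable space where $A$ becomes closed and the pseudometric becomes an honest metric, extend there, and pull back.

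Concretely, fix an admissible bounded metric $d_Z$ on $Z$ and consider, for each $n$, the admissible metric $\rho_n = d + \frac1n d_Z|_{A^2}$ on $A$. By Hausdorff's theorem each $\rho_n$ extends to an admissible metric on $Z$, but the limits are awkward. A cleaner route is the graph embedding. Put $W = Z \times (A/d)^{\ast}$, where $(A/d)^{\ast}$ is a metric space containing $A/d$ isometrically as a closed subset, for instance its completion, or better an absolute extensor such as a normed space via the Kuratowski--Wojdysławski embedding into a Banach space $E$.

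By Dugundji's extension theorem, the continuous map $q : A \to E$ extends to a continuous map $f : Z \to E$. Define $\tilde d_0(x,y)=\|f(x)-f(y)\|$, a continuous pseudometric on $Z$ extending $d$ (since $E$ contains $A/d$ isometrically).

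It remains to restore the norm bound. Put $\tilde d(x,y)=\min\{\tilde d_0(x,y),r\}$. This is again a continuous pseudometric, since truncation of a pseudometric by a constant preserves the triangle inequality. It agrees with $d$ on $A^2$ because $d\le r$ there, and $\|\tilde d\|=r$ because the supremum on $A^2$ is already $r$.

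The main obstacle is the isometric embedding of $A/d$ as a subset of a Banach space (Kuratowski--Wojdysławski, embedding into $\ell_\infty$-type or $C_b$ spaces) together with applicability of Dugundji's theorem, which needs $Z$ metrizable and $E$ locally convex. Both hold here. Closedness of $A$ is exactly what Dugundji requires.
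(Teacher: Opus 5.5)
Your proof is correct. It necessarily takes a different route from the paper, because the paper gives no proof of this theorem: it records it as the pseudometric version of Hausdorff's metric extension theorem and defers to Hausdorff and Ishiki.

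Stripped of the detours, your argument runs as follows:
\begin{itemize}
\item Pass to the metric quotient $(A/d,\bar d)$. The map $q$ onto it is continuous because $\bar d(q(a_n),q(a))=d(a_n,a)\to d(a,a)=0$.
\item Embed $(A/d,\bar d)$ isometrically into a Banach space $E$. The Kuratowski map $y\mapsto \bar d(y,\cdot)\in\ell_\infty(A/d)$ already does this, since $\bar d$ is bounded.
\item Extend $A\to E$ to a continuous $f\colon Z\to E$ by Dugundji's theorem, set $\tilde d_0(x,y)=\|f(x)-f(y)\|$, and truncate at $r$.
\end{itemize}

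This is essentially Toru\'nczyk's short proof of Hausdorff's theorem, with the additional terms that make the extension an admissible metric left out. That is precisely why it suits pseudometrics. Hausdorff's theorem itself concerns admissible metrics and does not apply verbatim to $d$, and $A/d$ is not a subspace of $Z$. So an argument of your kind is what actually underlies the cited ``pseudometric version''. The paper's citation buys brevity; your version is self-contained and shows that Dugundji's theorem is the only nontrivial input.

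Two cleanups:
\begin{itemize}
\item Delete the opening sentence about reducing to Hausdorff's theorem, the paragraph on the metrics $\rho_n$, and the graph space $W$; none of them is used.
\item The final truncation is valid but can be skipped. Dugundji's theorem places $f(Z)$ inside the convex hull of $f(A)$, and in a normed space this convex hull has the same diameter $r$ as $f(A)$, so $\tilde d_0\le r$ already.
\end{itemize}
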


\section{Maximal sets and zero sets}

Maximal sets and zero sets are our main tools to prove Main Theorem,
 which are seen in \cite{LNW,ELM}.
For a metrizable space $Z$, denote by $\fin_2(Z)$ the hyperspace consisting of non-empty finite sets in $Z$ of cardinality $\leq 2$,
 which is equipped with the Vietoris topology.
Let $\doubl(Z) \subset \fin_2(Z)$ be the subspace consisting of doubletons in $Z$,
 and for each $\{x,y\} \in \doubl(Z)$, set
 $$\mathcal{P}(Z,\{x,y\}) = \{d \in \sphm(Z) \mid d(x,y) = \|d\| = 1\}.$$
Given a pseudometric $d \in \sphm(Z)$, define the maximal set and the zero set of $d$ as follows:
 $$\mathcal{M}(Z,d) = \{\{x,y\} \in \doubl(Z) \mid d(x,y) = \|d\| = 1\},$$
 $$\mathcal{Z}(Z,d) = \{\{x,y\} \in \doubl(Z) \mid d(x,y) = 0\}.$$

\begin{lemma}\label{decomp.}
Let $Z$ be a compact metrizable space.
For each $d \in \sphm(Z)$ and each $\{u,v\} \in \doubl(Z)$ with $0 < d(u,v) < \|d\| = 1$, there exists $\rho \in \sphm(Z)$ such that $\rho(u,v) = 0$, $\mathcal{M}(Z,d) \cap \mathcal{Z}(Z,\rho) = \emptyset$, and $\mathcal{M}(Z,\rho) \cap \mathcal{Z}(Z,d) = \emptyset$.
\end{lemma}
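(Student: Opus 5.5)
The plan is to take $\rho$ to be the quotient pseudometric of $d$ obtained by gluing $u$ and $v$ together, and then to normalize it. Concretely, define
$$\rho'(x,y) = \min\{d(x,y),\ d(x,u)+d(v,y),\ d(x,v)+d(u,y)\}$$
for $(x,y) \in Z^2$, and put $\rho = \rho'/\|\rho'\|$.

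First I will check that $\rho'$ is a bounded continuous pseudometric with $\rho' \leq d$ and $\rho'(u,v) = 0$. Continuity and boundedness are clear, since $\rho'$ is the minimum of three continuous functions each dominated by a bounded expression. Symmetry is clear, because swapping $x$ and $y$ interchanges the last two terms. The value $\rho'(u,v)=0$ comes from the second term, which at $(u,v)$ equals $d(u,u)+d(v,v)=0$. The triangle inequality $\rho'(x,z) \leq \rho'(x,y)+\rho'(y,z)$ is the usual argument for gluing two points. Choose the realizing term in each of $\rho'(x,y)$ and $\rho'(y,z)$. If a path passes through the glued point $\{u,v\}$ twice, shortcut it, using $d(a,u)+d(v,b) \geq$ the appropriate term or $d(a,b)$ together with the triangle inequality for $d$. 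In each of the at most nine combinations, the sum dominates one of the three terms defining $\rho'(x,z)$. This case check is routine and is the only computational step.

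Next I will show $\rho'$ is positive on $\mathcal{M}(Z,d)$. Suppose $d(x,y)=1$ and $\rho'(x,y)=0$. Since $d(x,y) \neq 0$, one of the other terms vanishes, say $d(x,u)=d(v,y)=0$. Then $d(u,v) \leq d(u,x)+d(x,y)+d(y,v)$ gives nothing directly. Instead, $1 = d(x,y) \leq d(x,u)+d(u,v)+d(v,y) = d(u,v) < 1$, a contradiction. The other term is handled symmetrically. This is where the hypothesis $d(u,v)<1$ is used.

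By compactness of $Z$ and continuity of $d$, $\mathcal{M}(Z,d)\neq\emptyset$. Here $Z$ is non-degenerate because $\{u,v\}$ is a doubleton, and $\|d\|=1$ is attained at some pair, which is necessarily a doubleton since $d$ vanishes on the diagonal. Hence $\|\rho'\|>0$, and $\rho=\rho'/\|\rho'\| \in \sphm(Z)$ is well defined. It satisfies $\rho(u,v)=0$, and by the previous step $\mathcal{M}(Z,d)\cap\mathcal{Z}(Z,\rho)=\emptyset$.

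Finally, take $\{x,y\}\in\mathcal{Z}(Z,d)$. Then $\rho'(x,y)\leq d(x,y)=0$, so $\rho(x,y)=0<1$, and $\{x,y\}\notin\mathcal{M}(Z,\rho)$. Thus $\mathcal{M}(Z,\rho)\cap\mathcal{Z}(Z,d)=\emptyset$. The main (mild) obstacle is the triangle inequality verification for $\rho'$. The positivity hypothesis $d(u,v)>0$ is not really needed by this construction, since if $d(u,v)=0$ one could simply take $\rho=d$.
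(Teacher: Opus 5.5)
Your proof is correct, and it follows a genuinely different, more explicit route than the paper.

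The paper builds $\rho$ abstractly:
\begin{itemize}
\item it decomposes $Z$ into the $d$-zero classes $Z_z$ and merges $Z_u\cup Z_v$ into a single piece;
\item it verifies that the quotient map $q$ is closed, so that $Z/\mathcal{D}$ is metrizable;
\item it takes an arbitrary admissible metric $d'$ of norm $1$ on $Z/\mathcal{D}$ and sets $\rho=d'\circ(q\times q)$.
\end{itemize}
The two disjointness conditions then come from $q(x)\neq q(y)$ on $\mathcal{M}(Z,d)$ and $q(x)=q(y)$ on $\mathcal{Z}(Z,d)$.

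You instead write down the gluing pseudometric directly. The only work left is the nine-case triangle inequality and the bound $\rho'(x,y)\geq d(x,y)-d(u,v)$ behind your positivity step. The nine cases do go through: in each one you either drop a nonnegative middle term or use the triangle inequality for $d$ once. Note that $\rho'(x,y)=0$ exactly when $q(x)=q(y)$. So for compact $Z$ your $\rho$ is of the paper's form $d'\circ(q\times q)$, with an explicit $d'$.

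What your route buys:
\begin{itemize}
\item It avoids the closed-map argument, the metrizability of the quotient, and the pullback lemma.
\item It does not really need compactness. The pointwise bound $\rho'\geq d-d(u,v)$ already gives $\|\rho'\|\geq 1-d(u,v)>0$ without knowing $\mathcal{M}(Z,d)\neq\emptyset$.
\item It yields the quantitative sandwich $d-d(u,v)\leq\rho'\leq d$.
\end{itemize}
The paper's route is more conceptual and leaves the choice of $d'$ free, but it depends on compactness. Compactness is still needed where the lemma is used, in Lemma~\ref{intersect.}, to convert the disjointness conditions into $\|\rho-T(d_i)\|<1$.
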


\begin{proof}
For every point $z \in Z$, let
 $$Z_z = \{x \in Z \mid d(x,z) = 0\}.$$
Since $Z$ is compact and $d$ is a pseudometric on $Z$,
 we can decompose $Z$ by the following mutually disjoint compact subsets:
 $$\mathcal{D} = \{Z_z \mid z \in Z \text{ such that } d(z,u), d(z,v) > 0\} \cup \{Z_u \cup Z_v\}.$$
This induces the natural quotient space $Z/\mathcal{D}$ and the natural quotient map
 $$q : Z \ni z \to Z_z \in Z/\mathcal{D},$$
 where if $z \in Z_u \cup Z_v$,
 then $q(z) = Z_u \cup Z_v$.
To prove that $q$ is a closed map, fix any closed subset $A \subset Z$.
For every convergent sequence $\{x_n\} \subset q^{-1}(q(A))$ and each positive integer $n$, there is a point $y_n \in A$ such that $q(x_n) = q(y_n)$.
Since $A$ is compact,
 we can replace $\{y_n\} \subset A$ with a convergent subsequence,
 and then $\lim_{n \to \infty} y_n \in A$.
Observe that the relation
 $$R = \{(x,y) \in Z^2 \mid q(x) = q(y)\}$$
 is closed in $Z^2$.
Indeed, for all pairs $(x,y) \in Z^2 \setminus R$, since $q(x) \neq q(y)$,
 $d(x,y) > 0$,
 and moreover we may assume that $d(x,u), d(x,v) > 0$ without loss of generality.
It follows from the continuity of $d$ that there exist open neighborhoods $U$ and $V$ of $x$ and $y$ in $Z$ respectively, such that for every $(z,w) \in U \times V$, $d(z,w) > 0$ and $d(z,u), d(z,v) > 0$.
Then $q(z) \neq q(w)$,
 so $(z,w) \in Z^2 \setminus R$,
 which means that $R$ is a closed subset of $Z^2$.
Therefore the sequence $\{(x_n,y_n)\}$ is converging in $R$,
 and hence
 $$q\Big(\lim_{n \to \infty} x_n\Big) = q\Big(\lim_{n \to \infty} y_n\Big) \in q(A).$$
Then $\lim_{n \to \infty} x_n \in q^{-1}(q(A))$,
 which implies that $q^{-1}(q(A))$ is closed.
It follows that $q$ is a closed map.

By virtue of \cite[Proposition~2.4.9 and Theorem~4.2.13]{E}, the quotient space $Z/\mathcal{D}$ is also metrizable,
 so fix any admissible metric $d' \in \adm(Z/\mathcal{D})$ with $\|d'\| = 1$.
Since the quotient map $q$ is a continuous surjection,
 due to \cite[Lemma~2.4]{IK2}, $\rho : Z^2 \to [0,\infty)$ defined by
 $$\rho(x,y) = d'(q(x),q(y))$$
 is a pseudometric of the sup-norm $1$,
 that is the desired.
Indeed,
 $$\rho(u,v) = d'(q(u),q(v)) = 0$$
 because $q(u) = q(v)$.
For any doubleton $\{x,y\} \in \mathcal{M}(Z,d)$, since
 $$d(x,y) = \|d\| = 1 > d(u,v) > 0,$$
 $Z_x \cap Z_y = \emptyset$ and $\{x,y\} \not\subset Z_u \cup Z_v$.
Therefore $q(x) \neq q(y)$,
 and hence
 $$\rho(x,y) = d'(q(x),q(y)) > 0,$$
 which means that $\mathcal{M}(Z,d) \cap \mathcal{Z}(Z,\rho) = \emptyset$.
For any doubleton $\{x,y\} \in \mathcal{Z}(Z,d)$, since $d(x,y) = 0$,
 $q(x) = q(y)$.
Thus
 $$\rho(x,y) = d'(q(x),q(y)) = 0.$$
It follows that $\mathcal{M}(Z,\rho) \cap \mathcal{Z}(Z,d) = \emptyset$.
We complete the proof.
\end{proof}

From now on, we assume that $X$ and $Y$ are non-degenerate metrizable spaces and $T : \sphm(X) \to \sphm(Y)$ is an isometry.

\begin{lemma}\label{intersect.}
If $Y$ is compact,
 then
 $$\bigcap_{d \in \mathcal{P}(X,\{x,y\})} \mathcal{M}(Y,T(d)) \neq \emptyset$$
 for every doubleton $\{x,y\} \in \doubl(X)$.
\end{lemma}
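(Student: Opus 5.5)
The plan is a finite intersection property argument in the compact hyperspace $\fin_2(Y)$. For $\sigma \in \sphm(Y)$ the set $\mathcal{M}(Y,\sigma)$ is non-empty, because $Y^2$ is compact and the sup is attained. It is also closed in $\fin_2(Y)$, since it is the preimage of $1$ under the continuous map $\{a,b\} \mapsto \sigma(a,b)$. It contains no singletons, because $\sigma(a,a) = 0$. Since $\fin_2(Y)$ is compact, it suffices to show that for finitely many $d_1,\dots,d_n \in \mathcal{P}(X,\{x,y\})$ the sets $\mathcal{M}(Y,T(d_i))$ have a common element. Put $d = \frac{1}{n}\sum_i d_i$. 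Then $d \in \mathcal{P}(X,\{x,y\})$, because $d(x,y)=1$ and $d \leq 1$. Moreover, $d$ has two key properties: $d(a,b)=1$ forces $d_i(a,b)=1$ for all $i$, and $d(a,b)=0$ forces $d_i(a,b)=0$ for all $i$. I will show that $\mathcal{M}(Y,T(d)) \subset \mathcal{M}(Y,T(d_i))$ for each $i$, which finishes the proof.

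The basic tool is a description of distance $1$ for non-negative functions bounded by $1$. For $f,g \in \sphm(Z)$ we have $\|f-g\| \le 1$. When $Z$ is compact, $\|f-g\| = 1$ holds iff there is a pair $(a,b)$ with $\{f(a,b),g(a,b)\} = \{0,1\}$, since the sup is attained on the compact set $Z^2$. On the diagonal both functions vanish, so such a pair is a doubleton. Hence $\|f-g\|=1$ iff $\mathcal{M}(Z,f)\cap\mathcal{Z}(Z,g) \neq \emptyset$ or $\mathcal{M}(Z,g)\cap\mathcal{Z}(Z,f)\neq\emptyset$. On the $X$ side I only need the inequality $\|f-g\| \ge$ the value at a single pair, so compactness of $X$ is not required there. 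Combining this with the two properties of $d$: if $\rho \in \sphm(X)$ satisfies $\|\rho - d\| = 1$, then $\|\rho - d_i\| = 1$ for every $i$. Indeed, for each $\epsilon$ there is a pair where $|\rho - d| > 1-\epsilon$. Choose $(a_k,b_k)$ with $|\rho - d|(a_k,b_k) \to 1$. Since $0 \le \rho, d \le 1$, either $d(a_k,b_k) \to 1$ and $\rho(a_k,b_k) \to 0$, or the reverse. Because $d$ is an average of functions bounded by $1$, $d \to 1$ forces every $d_i \to 1$, and similarly $d \to 0$ forces every $d_i \to 0$. Therefore $\|\rho - d_i\| = 1$.

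Now suppose $\{p,q\} \in \mathcal{M}(Y,T(d))$ but $T(d_i)(p,q) < 1$ for some $i$. There are two cases.

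\emph{Case $T(d_i)(p,q) = 0$.} Then $\|T(d)-T(d_i)\| = 1$, so $\|d - d_i\| = 1$. Applying the limiting argument above with $\rho = d_i$ gives $\|d_i - d_i\| = 1$, which is absurd.

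\emph{Case $0 < T(d_i)(p,q) < 1$.} Apply Lemma \ref{decomp.} on the compact space $Y$ to $T(d_i)$ and $\{p,q\}$. This gives $\sigma \in \sphm(Y)$ with $\sigma(p,q) = 0$ and with $\mathcal{M}(Y,T(d_i))$ disjoint from $\mathcal{Z}(Y,\sigma)$, and $\mathcal{M}(Y,\sigma)$ disjoint from $\mathcal{Z}(Y,T(d_i))$. By the characterization in the compact space $Y$, $\|\sigma - T(d_i)\| < 1$. On the other hand $\|\sigma - T(d)\| = 1$, witnessed by $(p,q)$. Let $\rho = T^{-1}(\sigma)$. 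Since $T$ is an isometry, $\|\rho - d\| = 1$, and hence $\|\rho - d_i\| = 1$ by the previous paragraph. This contradicts $\|\sigma - T(d_i)\| < 1$.

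The main obstacle is the second case. There one needs a test pseudometric that is at distance exactly $1$ from $T(d)$ but strictly less than $1$ from $T(d_i)$, and Lemma \ref{decomp.} is precisely the tool that supplies it. The only point that requires care is the strictness $\|\sigma - T(d_i)\|<1$, which depends on the compactness of $Y$. This is why the hypothesis on $Y$ is needed, while $X$ can be handled by the limiting argument.
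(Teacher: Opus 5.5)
Your proof is correct and follows essentially the same route as the paper: the finite intersection property in the compact hyperspace $\fin_2(Y)$, the average $d=\frac{1}{n}\sum_i d_i$, and the two-case proof of $\mathcal{M}(Y,T(d))\subset\mathcal{M}(Y,T(d_i))$, with Lemma~\ref{decomp.} used in the case $0<T(d_i)(p,q)<1$. The only difference is that the paper gets the key implication ``$\|\rho-d_i\|<1\Rightarrow\|\rho-d\|<1$'' in one line from $\|\rho-d\|\le\frac{1}{n}\sum_j\|\rho-d_j\|$ (each term is at most $1$), whereas you prove its contrapositive by a pointwise limiting argument, which is equally valid (strictly, you should pass to a subsequence there, or simply use the pointwise bounds $nd-(n-1)\le d_i\le nd$).
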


\begin{proof}
Since $Y$ is compact and the natural surjection
 $$i : Y^2 \ni (z,w) \mapsto \{z,w\} \in \fin_2(Y)$$
 is continuous (cf.~\cite[Lemma~5.3.4]{vM2}),
 the hyperspace $\fin_2(Y)$ is also compact.
For every $d \in \mathcal{P}(X,\{x,y\})$, it follows from the continuity of $T(d)$ that $\mathcal{M}(Y,T(d))$ is a closed set in $\fin_2(Y)$, see the proof of Lemma~3.6 of \cite{Kos29},
 and hence $\mathcal{M}(Y,T(d))$ is compact.

We will show the finite intersection property of the family
 $$\{\mathcal{M}(Y,T(d)) \mid d \in \mathcal{P}(X,\{x,y\})\}.$$
Fix any positive integer $n$ and any $d_i \in \mathcal{P}(X,\{x,y\})$, $1 \leq i \leq n$.
Letting $d = \sum_{i = 1}^n d_i/n$,
 we check that for each $i \in \{1, \ldots, n\}$,
 $$\{\rho \in \sphm(X) \mid \|\rho - d_i\| < 1\} \subset \{\rho \in \sphm(X) \mid \|\rho - d\| < 1\}.$$
Indeed, for any $\rho \in \sphm(X)$ with $\|\rho - d_i\| < 1$,
 $$\|\rho - d\| = \Bigg\|\rho - \sum_{j = 1}^n \frac{d_j}{n}\Bigg\| \leq \frac{1}{n}\sum_{j = 1}^n \|\rho - d_j\| = \frac{1}{n}\sum_{j \neq i} \|\rho - d_j\| + \frac{\|\rho - d_i\|}{n} < 1.$$
Since $T$ is isometric,
 we also have that
 $$\{\rho \in \sphm(Y) \mid \|\rho - T(d_i)\| < 1\} \subset \{\rho \in \sphm(Y) \mid \|\rho - T(d)\| < 1\}.$$
Next, we will verify that $\mathcal{M}(Y,T(d)) \subset \mathcal{M}(Y,T(d_i))$.
Assume that there is $\{u,v\} \in \mathcal{M}(Y,T(d)) \setminus \mathcal{M}(Y,T(d_i))$.
Then $T(d_i)(u,v) < 1 = T(d)(u,v)$.
Supposing that $T(d_i)(u,v) = 0$,
 we get that
 $$1 = T(d)(u,v) - T(d_i)(u,v) \leq \|T(d) - T(d_i)\| < 1,$$
 which is a contradiction.
Hence $0 < T(d_i)(u,v) < 1$.
Applying Lemma~\ref{decomp.}, we can find a pseudometric $\rho \in \sphm(Y)$ such that $\rho(u,v) = 0$, $\mathcal{M}(Y,T(d_i)) \cap \mathcal{Z}(Y,\rho) = \emptyset$, and $\mathcal{M}(Y,\rho) \cap \mathcal{Z}(Y,T(d_i)) = \emptyset$.
Then $\|\rho - T(d_i)\| < 1$,
 but
 $$\|\rho - T(d)\| \geq |\rho(u,v) - T(d)(u,v)| = 1.$$
This yields a contradiction.
Consequently, $\mathcal{M}(Y,T(d)) \subset \mathcal{M}(Y,T(d_i))$.
According to \cite[Theorem~3.1.1]{E},
 $$\bigcap_{d \in \mathcal{P}(X,\{x,y\})} \mathcal{M}(Y,T(d)) \neq \emptyset.$$
The proof is completed.
\end{proof}

Combining the above lemma with the same argument as Proposition~3.8 of \cite{Kos29}, we can obtain the following bijection between $\doubl(X)$ and $\doubl(Y)$.

\begin{proposition}\label{bij.}
Assume that $X$ and $Y$ are compact.
There is a bijection $\Phi : \doubl(Y) \to \doubl(X)$ such that
 $$T(\mathcal{P}(X,\{x,y\})) = \mathcal{P}(Y,\Phi^{-1}(\{x,y\}))$$
 for any doubleton $\{x,y\} \in \doubl(X)$ and
 $$T^{-1}(\mathcal{P}(Y,\{z,w\})) = \mathcal{P}(X,\Phi(\{z,w\}))$$
 for any doubleton $\{z,w\} \in \doubl(Y)$.
\end{proposition}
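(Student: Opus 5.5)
We want a bijection $\Phi$ with $T(\mathcal{P}(X,\{x,y\})) = \mathcal{P}(Y,\Phi^{-1}(\{x,y\}))$. The plan is to pick, for each $\{x,y\}$, a doubleton in $\bigcap_{d\in\mathcal{P}(X,\{x,y\})}\mathcal{M}(Y,T(d))$ using Lemma~\ref{intersect.}, apply the same lemma to $T^{-1}$, and show that the two resulting assignments are mutually inverse. Lemma~\ref{intersect.} applies to $T^{-1}:\sphm(Y)\to\sphm(X)$ as well, because $X$ is compact. So for each $\{z,w\}\in\doubl(Y)$ we also get some $\{x,y\}\in\bigcap_{\rho\in\mathcal{P}(Y,\{z,w\})}\mathcal{M}(X,T^{-1}(\rho))$.

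\textbf{Step 1: a characterization of $\mathcal{P}$ by distances.} For $d,d'\in\sphm(X)$ I will show that $d'\in\mathcal{P}(X,\{x,y\})$ if and only if $\|d'-d\|<2$ for every $d\in\mathcal{P}(X,\{x,y\})$, or some similar metric characterization preserved by $T$.

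The natural metric invariant is the following. For $d\in\mathcal{P}(X,\{x,y\})$ and any $e\in\sphm(X)$ with $e(x,y)=0$, we have $\|d-e\|=1$. Conversely, if $d'(x,y)<1$, Lemma~\ref{decomp.} (or a direct construction via Theorem~\ref{ext.}) should produce some $d\in\mathcal{P}(X,\{x,y\})$ with $\|d-d'\|$ large.

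So I would rather prove the inclusions directly, as follows.

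\textbf{Step 2: $T(\mathcal{P}(X,\{x,y\}))\subset\mathcal{P}(Y,\{z,w\})$.} Let $\{z,w\}$ lie in the intersection given by Lemma~\ref{intersect.} for $\{x,y\}$. Then by definition $T(d)(z,w)=1$ for all $d\in\mathcal{P}(X,\{x,y\})$, which is exactly this inclusion.

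\textbf{Step 3: the reverse inclusion.} Apply Lemma~\ref{intersect.} to $T^{-1}$ and $\{z,w\}$. This gives $\{x',y'\}$ with
$$T^{-1}(\mathcal{P}(Y,\{z,w\}))\subset\mathcal{P}(X,\{x',y'\}).$$
Combining with Step 2 yields $\mathcal{P}(X,\{x,y\})\subset\mathcal{P}(X,\{x',y'\})$.

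\textbf{Step 4 (key lemma): distinct doubletons give incomparable sets.} I claim that $\mathcal{P}(X,\{x,y\})\subset\mathcal{P}(X,\{x',y'\})$ forces $\{x,y\}=\{x',y'\}$. Suppose $\{x,y\}\neq\{x',y'\}$. I need a pseudometric on $X$ of norm $1$ attaining $1$ at $\{x,y\}$ but not at $\{x',y'\}$. Take the finite set $A=\{x,y,x',y'\}$ and define a pseudometric on $A$ as follows: distance $1$ between $x$ and $y$, and distance $1/2$ for every other pair of distinct points. This is a metric on $A$ (triangle inequality holds since $1\le 1/2+1/2$), and $\{x',y'\}$ receives value at most $1/2$. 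Extend it to $X$ with norm $1$ by Theorem~\ref{ext.}. The extension lies in $\mathcal{P}(X,\{x,y\})\setminus\mathcal{P}(X,\{x',y'\})$, a contradiction. Hence $\{x',y'\}=\{x,y\}$, and the chain of inclusions collapses to equality:
$$T(\mathcal{P}(X,\{x,y\}))=\mathcal{P}(Y,\{z,w\}).$$

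\textbf{Step 5: well-definedness and bijectivity.} By the same argument on $Y$, the doubleton $\{z,w\}$ is uniquely determined by the set $\mathcal{P}(Y,\{z,w\})$. So define $\Phi^{-1}(\{x,y\})=\{z,w\}$; this is well defined. Running the argument symmetrically from $Y$ gives a map in the opposite direction, and the equalities show that the two maps are mutual inverses. Hence $\Phi$ is a bijection, and the identity for $T^{-1}$ follows by applying $T^{-1}$ to the established equality.

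The main obstacle is Step 4: the separating pseudometric must be a genuine element of $\sphm(X)$, which is what the extension theorem (Theorem~\ref{ext.}) supplies. The rest is bookkeeping.
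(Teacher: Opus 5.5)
Your proposal is correct and is essentially the paper's argument: $\Phi$ and $\Phi^{-1}$ are read off from the nonempty intersections that Lemma~\ref{intersect.} gives for $T$ and $T^{-1}$. Your Step~4 separation via Theorem~\ref{ext.} supplies exactly the details the paper leaves to Proposition~3.8 of \cite{Kos29}, namely that these intersections are singletons and that the two assignments are mutually inverse; the abandoned Step~1 should be deleted, since its tentative criterion is vacuous ($\|d-d'\|\le 1$ for all $d,d'\in\sphm(X)$).
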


\begin{proof}
In fact, we can define a bijection $\Phi : \doubl(Y) \to \doubl(X)$ as follows:
 $$\{\Phi(\{z,w\})\} = \bigcap_{\rho \in \mathcal{P}(Y,\{z,w\})} \mathcal{M}(X,T^{-1}(\rho)) \text{ and } \{\Phi^{-1}(\{x,y\})\} = \bigcap_{d \in \mathcal{P}(X,\{x,y\})} \mathcal{M}(Y,T(d))$$
 for each $\{z,w\} \in \doubl(Y)$ and each $\{x,y\} \in \doubl(X)$.
\end{proof}

\section{Composition operators by homeomorphisms}

This section is devoted to proving that $T$ is a composition operator by certain homeomorphism form $Y$ to $X$.

\begin{lemma}\label{transl.}
Let $Z$ be a metrizable space.
For every $d \in \sphm(Z)$ and every $\{x,y\} \in \doubl(Z)$, there exists $\rho \in \conti(Z^2)$ such that $d + \rho \in \mathcal{P}(Z,\{x,y\})$ and $\|\rho\| = 1 - d(x,y)$.
\end{lemma}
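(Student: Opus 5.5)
We need $\rho \in \conti(Z^2)$ with $d+\rho$ a pseudometric, $\|d+\rho\| = 1 = (d+\rho)(x,y)$, and $\|\rho\| = 1-d(x,y)$. Put $a = d(x,y) \in [0,1]$ and $c = 1-a$. If $c = 0$, take $\rho = 0$. So assume $c>0$.

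The plan is to add to $d$ a bump that lifts the distance between $x$ and $y$ by exactly $c$ while never pushing any other distance above $1$. The natural candidate is
$$\sigma(z,w) = \min\{d(z,w) + \tau(z,w),\,1\},$$
where $\tau$ is a bounded continuous pseudometric with $\tau(x,y) = c$ and $0 \le \tau \le c$ everywhere. Then $\rho = \sigma - d$.

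First I construct $\tau$. Choose an admissible metric $e$ on $Z$ (any bounded one), and a continuous function $f : Z \to [0,c]$ with $f(x) = 0$ and $f(y) = c$. For instance, since $x \neq y$, set
$$f(z) = c\,\frac{e(z,x)}{e(z,x)+e(z,y)}.$$
Then $\tau(z,w) = |f(z)-f(w)|$ is a continuous pseudometric with values in $[0,c]$ and $\tau(x,y) = c$.

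Next I check that $\sigma$ works.
\begin{itemize}
\item $d+\tau$ is a continuous pseudometric, and truncation $\min\{\cdot,1\}$ of a pseudometric is again a pseudometric. Hence $\sigma \in \pseudo(Z)$.
\item $\sigma(x,y) = \min\{a+c,1\} = 1$ and $\sigma \le 1$, so $\sigma \in \mathcal{P}(Z,\{x,y\})$.
\item Since $d \le 1$, we have $d \le \sigma \le d+\tau$. Therefore $0 \le \rho \le \tau \le c$, which gives $\|\rho\| \le c$.
\item At $(x,y)$ we get $\rho(x,y) = 1-a = c$, so $\|\rho\| = c$ exactly.
\item Continuity and boundedness of $\rho$ are clear.
\end{itemize}

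The only step needing care is the truncation claim: if $\sigma(z,w) \le \sigma(z,u)+\sigma(u,w)$ fails, then one of the right-hand terms is $1$, since otherwise it is the triangle inequality for $d+\tau$. But if a right-hand term equals $1$, the right-hand side is at least $1 \ge \sigma(z,w)$, a contradiction. Everything else is routine, so I expect no real obstacle.

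Note that compactness of $Z$ is not needed for this construction.
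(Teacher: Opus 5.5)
Your proof is correct, but you construct the element of $\mathcal{P}(Z,\{x,y\})$ differently from the paper.

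The paper splits into three cases. If $d(x,y)=1$ it takes $\rho=0$. If $d(x,y)=0$ it fixes an arbitrary $\rho_0\in\mathcal{P}(Z,\{x,y\})$ and uses only the crude bound $\|\rho_0-d\|\le 1$, valid because both functions take values in $[0,1]$. If $0<d(x,y)<1$ it rescales and truncates, $\rho_0=\min\{d/d(x,y),1\}$. It then verifies $\|\rho_0-d\|=1-d(x,y)$ by comparing $d(z,w)$ with $d(x,y)$ in three subcases. So the paper's main case is built from $d$ alone, but the division by $d(x,y)$ forces a separate case when $d(x,y)=0$.

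Your additive bump $\sigma=\min\{d+\tau,1\}$, with $\tau(z,w)=|f(z)-f(w)|$, buys three things:
\begin{enumerate}
\item It treats every value $d(x,y)<1$ uniformly.
\item It replaces the subcase computation by the one-line sandwich $d\le\sigma\le d+\tau$.
\item It gives $\rho\ge 0$, i.e.\ $d+\rho\ge d$ pointwise, which the paper's $d(x,y)=0$ case does not guarantee.
\end{enumerate}
Your only extra ingredient is the separating function $f$ obtained from an admissible metric. The paper relies on the same kind of input implicitly when it asserts that some $\rho_0\in\mathcal{P}(Z,\{x,y\})$ exists.

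Your truncation argument for the triangle inequality is fine. Both proofs work for arbitrary metrizable $Z$, as the lemma requires.
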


\begin{proof}
In the case that $d(x,y) = 1$, we may choose $\rho = 0 \in \conti(Z^2)$.
In the case that $d(x,y) = 0$, fix any metric $\rho_0 \in \mathcal{P}(Z,\{x,y\})$.
Let $\rho = \rho_0 - d \in \conti(Z^2)$,
 so
 $$1 \geq \|\rho\| = \|\rho_0 - d\| \geq \rho_0(x,y) - d(x,y) = 1 - d(x,y) = 1,$$
 that is, $\|\rho\| = 1 - d(x,y)$.

In the case that $0 < d(x,y) < 1$, put
 $$\rho_0 = \min\bigg\{\frac{d}{d(x,y)},1\bigg\}.$$
Observe that $\rho_0 \in \mathcal{P}(Z,\{x,y\})$.
Define $\rho = \rho_0 - d \in \conti(Z^2)$.
To show that it is the desired function, take any $z, w \in Z$.
When $d(z,w) > d(x,y)$,
 $$\rho(z,w) = \rho_0(z,w) - d(z,w) = \min\bigg\{\frac{d(z,w)}{d(x,y)},1\bigg\} - d(z,w) = 1 - d(z,w) < 1 - d(x,y).$$
When $d(z,w) = d(x,y)$,
 $$\rho(z,w) = \rho_0(z,w) - d(z,w) = \min\bigg\{\frac{d(z,w)}{d(x,y)},1\bigg\} - d(z,w) = 1 - d(x,y).$$
When $d(z,w) < d(x,y)$,
 $$\rho(z,w) = \rho_0(z,w) - d(z,w) = \min\bigg\{\frac{d(z,w)}{d(x,y)},1\bigg\} - d(z,w) = \frac{d(z,w)(1 - d(x,y))}{d(x,y)} < 1 - d(x,y).$$
It follows that $\|\rho\| = 1 - d(x,y)$.
The proof is completed.
\end{proof}

From now on, assume that $X$ and $Y$ are compact metrizable spaces,
 and let $\Phi : \doubl(Y) \to \doubl(X)$ be a bijection as in Proposition~\ref{bij.}.
For simplicity, we shall write $d(\Phi\{z,w\}) = d(x,y)$ for each $d \in \pseudo(X)$ and each $\{z,w\} \in \doubl(Y)$ with $\Phi(\{z,w\}) = \{x,y\} \in \doubl(X)$.

\begin{proposition}\label{eq.}
For any pseudometric $d \in \pseudo(X)$ and any doubleton $\{x,y\} \in \doubl(Y)$,
 $$T(d)(x,y) = d(\Phi(\{x,y\}))$$
 holds.
\end{proposition}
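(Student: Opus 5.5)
We have to show $T(d)(x,y) = d(\Phi(\{x,y\}))$ for $d \in \sphm(X)$; the statement says $\pseudo(X)$, but $T$ is only defined on $\sphm(X)$, so we read it that way. Fix $d \in \sphm(X)$ and $\{x,y\} \in \doubl(Y)$, and write $\{a,b\} = \Phi(\{x,y\})$. We prove the two inequalities $T(d)(x,y) \geq d(a,b)$ and $T(d)(x,y) \leq d(a,b)$ separately. The first uses the translation Lemma~\ref{transl.} together with Proposition~\ref{bij.}. The second applies the same argument to $T^{-1}$ combined with a ``zero'' construction.

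\emph{Lower bound.} By Lemma~\ref{transl.} applied in $X$ to $d$ and $\{a,b\}$, there is $\rho \in \conti(X^2)$ with $d' = d + \rho \in \mathcal{P}(X,\{a,b\})$ and $\|d - d'\| = \|\rho\| = 1 - d(a,b)$. By Proposition~\ref{bij.}, $T(d') \in \mathcal{P}(Y,\{x,y\})$, so $T(d')(x,y) = 1$. Since $T$ is an isometry,
$$1 - T(d)(x,y) = T(d')(x,y) - T(d)(x,y) \leq \|T(d') - T(d)\| = \|d' - d\| = 1 - d(a,b).$$
Hence $T(d)(x,y) \geq d(a,b)$. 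The same argument for $T^{-1}$, with $\Phi^{-1}$ and Proposition~\ref{bij.}, gives $d(a,b) = T^{-1}(T(d))(a,b) \geq T(d)(x,y)$. Applying Lemma~\ref{transl.} in $Y$ to $T(d)$ and $\{x,y\}$ yields $e \in \mathcal{P}(Y,\{x,y\})$ with $\|e - T(d)\| = 1 - T(d)(x,y)$. Then $T^{-1}(e) \in \mathcal{P}(X,\{a,b\})$, so
$$1 - d(a,b) \leq \|T^{-1}(e) - d\| = \|e - T(d)\| = 1 - T(d)(x,y),$$
which is the reverse inequality. Together the two bounds give equality.

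This symmetric argument appears to settle the statement at once. The only real obstacle is making sure Lemma~\ref{transl.} produces an element of $\sphm$ at the correct distance. It does: $d + \rho \in \mathcal{P}(Z,\{x,y\}) \subset \sphm(Z)$, and the distance equals $\|\rho\|$ exactly. Note also that Proposition~\ref{bij.} is used in both directions, in the form $T^{-1}(\mathcal{P}(Y,\{x,y\})) = \mathcal{P}(X,\Phi(\{x,y\}))$. The degenerate cases $d(a,b) \in \{0,1\}$ are already covered by Lemma~\ref{transl.}, so no separate case analysis is needed.
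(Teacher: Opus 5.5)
Your proof is correct and is essentially the paper's own argument: Lemma~\ref{transl.} produces an element of $\mathcal{P}(\cdot,\cdot)$ at distance exactly $1$ minus the current value, Proposition~\ref{bij.} carries it across $T$ or $T^{-1}$, and the isometry yields each inequality. The paper writes out only the $T^{-1}$ direction and says ``similarly'' for the other; it also treats $T(d)(x,y)=1$ as a separate case, which, as you note, is already covered by Lemma~\ref{transl.} with $\rho=0$. The ``zero'' construction announced in your overview is never used and can be deleted, as can the interim sentence asserting $d(a,b)\geq T(d)(x,y)$ before you actually prove it.
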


\begin{proof}
In the case that $T(d)(x,y) = 1$, the equality follows from Proposition~\ref{bij.} immediately.
When $T(d)(x,y) < 1$,
 we prove that $T(d)(x,y) \leq d(\Phi(\{x,y\}))$.
Using Lemma~\ref{transl.}, we can take $\rho \in \conti(Y^2)$ so that $T(d) + \rho \in \mathcal{P}(Y,\{x,y\})$ and $\|\rho\| = 1 - T(d)(x,y)$.
By Proposition~\ref{bij.},
 $$T^{-1}(\mathcal{P}(Y,\{x,y\})) = \mathcal{P}(X,\Phi(\{x,y\})),$$
 and hence $T^{-1}(T(d) + \rho) \in \mathcal{P}(X,\Phi(\{x,y\}))$.
Then
 $$T^{-1}(T(d) + \rho)(\Phi(\{x,y\})) = 1.$$
Since $T$ is isometry,
 \begin{align*}
  1 - d(\Phi(\{x,y\})) &= T^{-1}(T(d) + \rho)(\Phi(\{x,y\})) - d(\Phi(\{x,y\}))\\
  &\leq \|T^{-1}(T(d) + \rho) - d\| = \|T(d) + \rho - T(d)\| = \|\rho\| = 1 - T(d)(x,y),
 \end{align*}
 which implies that $T(d)(x,y) \leq d(\Phi(\{x,y\}))$.
Similarly, we get that $T(d)(x,y) \geq d(\Phi(\{x,y\}))$.
The proof is completed.
\end{proof}

We will construct a homeomorphism from $Y$ to $X$ that is compatible with the bijection $\Phi$.

\begin{proposition}\label{corresp.}
There exists a homeomorphism $\phi : Y \to X$ such that $\Phi(\{x,y\}) = \{\phi(x),\phi(y)\}$ for all points $x, y \in Y$ and $\Phi^{-1}(\{z,w\}) = \{\phi^{-1}(z),\phi^{-1}(w)\}$ for all points $z, w \in X$.
\end{proposition}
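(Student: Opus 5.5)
Our plan is to show first that $\Phi$ preserves the relation "two doubletons share a point", then read off $\phi$ from it. The tool is Proposition~\ref{eq.}: for every $d \in \sphm(X)$ and $\{x,y\} \in \doubl(Y)$, $T(d)(x,y) = d(\Phi(\{x,y\}))$. Symmetrically, $T^{-1}(\rho)(z,w) = \rho(\Phi^{-1}(\{z,w\}))$ for $\rho \in \sphm(Y)$, by the same argument applied to $T^{-1}$.

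\textbf{Step 1: intersection is preserved.} Take distinct doubletons $\{a,b\},\{a,c\} \in \doubl(Y)$ sharing the point $a$. We claim $\Phi(\{a,b\}) \cap \Phi(\{a,c\}) \neq \emptyset$. Suppose instead $\Phi(\{a,b\}) = \{p,q\}$ and $\Phi(\{a,c\}) = \{r,s\}$ are disjoint. The triangle inequality for every $T(d)$ gives
$$d(\Phi\{b,c\}) \geq |d(p,q) - d(r,s)| \quad \text{for all } d \in \sphm(X).$$
We will violate this. Since $X$ is metrizable and the four points $p,q,r,s$ are distinct, pick an admissible $d \in \sphm(X)$ with $d(p,q) = 1$ and $d(r,s)$ small; the extension Theorem~\ref{ext.} lets us prescribe $d$ on the finite set $\{p,q,r,s\} \cup \Phi(\{b,c\})$ and then extend. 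Choose $d$ on that set with $d(\Phi\{b,c\}) < 1 - d(r,s)$. If $\Phi(\{b,c\})$ contains points of both $\{p,q\}$ and $\{r,s\}$, we use the three-pair form of the triangle inequality instead. The combinatorics are finite, so a case check should give a finite pseudometric violating the inequality.

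If $b = c$ this is vacuous. For $\{b,c\}$ equal to one of the others, injectivity of $\Phi$ handles it. The same argument applies to $\Phi^{-1}$. Hence two doubletons meet if and only if their images meet.

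\textbf{Step 2: stars go to stars.} For $a \in Y$, let $S_a = \{\{a,b\} \mid b \neq a\}$, which is a maximal family of pairwise intersecting doubletons. The other maximal families are the triangles $\{\{a,b\},\{b,c\},\{a,c\}\}$. Step 1 shows $\Phi$ maps maximal pairwise intersecting families to such families. When $|Y| \geq 4$, stars are infinite or have at least three elements, and they are distinguishable from triangles. To rule out a star mapping to a triangle, take three doubletons $\{a,b\},\{a,c\},\{a,e\}$ of a star. If their images were $\{p,q\},\{q,r\},\{p,r\}$, then the image of $\{b,c\}$ must meet $\{p,q\}$ and $\{q,r\}$ but not $\{p,r\}$ (since $\{b,c\} \cap \{a,e\} = \emptyset$). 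So it is $\{q,t\}$ with $t \notin \{p,r\}$. Continuing this way yields a contradiction, again by finite combinatorics.

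Thus $\Phi(S_a) = S_{\phi(a)}$ for a unique $\phi(a) \in X$, and $\phi$ is a bijection with inverse built from $\Phi^{-1}$. Then $\{a,b\} \in S_a \cap S_b$ gives $\Phi(\{a,b\}) \in S_{\phi(a)} \cap S_{\phi(b)} = \{\{\phi(a),\phi(b)\}\}$.

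\textbf{Small cases.} If $|Y| = 2$, then $\doubl$ is a singleton and $|X| = 2$ by bijectivity, so any bijection works; this is where uniqueness fails. If $|Y| = 3$, then $|X| = 3$ and we define $\phi$ by sending $a$ to the point common to the images of the two doubletons containing $a$.

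\textbf{Step 3: continuity.} Suppose $y_n \to y$ in $Y$. Fix an admissible metric $d \in \sphm(X)$ and any $b \neq y$. Then
$$d(\phi(y_n),\phi(y)) = T(d)(y_n,y) \to 0,$$
so $\phi$ is continuous. By symmetry $\phi^{-1}$ is continuous, and so $\phi$ is a homeomorphism (compactness alone would also suffice).

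The main obstacle is Step 1: constructing the finite pseudometric in every configuration of how $\Phi(\{b,c\})$ sits relative to $\{p,q,r,s\}$. We would try the general choice "$d = 1$ on $\{p,q\}$, $d = 0$ on $\{r,s\}$, collapse $r$ and $s$", and check when $d(\Phi\{b,c\}) < 1$ is achievable.
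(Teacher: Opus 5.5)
\textbf{Gap: Step~2 fails when $|X|=|Y|=4$.} Your Steps~1 and~3 are sound and match the paper's outline. In Step~1 you use the reverse triangle inequality instead of the paper's values $1/3,1/3,1$, and the configuration check you leave open is immediate. A $\{0,1\}$-valued pseudometric with $d(p,q)=1$ that collapses both $\{r,s\}$ and $\Phi(\{b,c\})$ always exists: $\{r,s\}\cap\{p,q\}=\emptyset$ and $\Phi(\{b,c\})\neq\{p,q\}$, so these identifications can never join $p$ to $q$. Extending it by Theorem~\ref{ext.} gives $1=T(d)(a,b)\le T(d)(a,c)+T(d)(c,b)=0$, a contradiction.

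With four points, stars have exactly three members, just like triangles. Your claim that a star mapping onto a triangle yields a contradiction ``by finite combinatorics'' is then false. On $\doubl(\{1,2,3,4\})$, complementation $\{i,j\}\mapsto\{1,2,3,4\}\setminus\{i,j\}$ is a bijection preserving intersection and disjointness in both directions. Yet it sends the star $\{\{1,2\},\{1,3\},\{1,4\}\}$ onto the triangle on $\{2,3,4\}$. Continuing your own computation gives $\Phi(\{b,c\})=\{q,t\}$, $\Phi(\{b,e\})=\{p,t\}$ and $\Phi(\{c,e\})=\{r,t\}$ with $t\notin\{p,q,r\}$. This is fully consistent with everything Step~1 provides. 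For $|Y|\ge 5$ your argument does work: the image of a fourth member $\{a,f\}$ of the star would meet all three sides of the triangle, hence equal one of them, contradicting injectivity.

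\textbf{Fix.} For four points you must use the metric once more. This is what the paper's ``similar argument'' does when it shows $\bigcap_{z\neq y}\Phi(\{y,z\})$ is a singleton (deferred to \cite[Lemma~4.2]{Kos29}). In your configuration $\Phi(\{a,b\})=\{p,q\}$, $\Phi(\{a,c\})=\{q,r\}$, $\Phi(\{b,c\})=\{q,t\}$, take $d\in\sphm(X)$ as follows: $d$ vanishes on pairs from $\{p,q,r\}$ and equals $1$ between $t$ and each point of $\{p,q,r\}$, extended by Theorem~\ref{ext.}. Proposition~\ref{eq.} then gives $1=T(d)(b,c)\le T(d)(b,a)+T(d)(a,c)=d(p,q)+d(q,r)=0$, a contradiction. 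With this addition your proof goes through; your treatment of $|Y|\le 3$ and the continuity argument are fine.
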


\begin{proof}
Recall that the cardinality of $X$ is coincident with the one of $Y$ because $\Phi$ is a bijection.
The case where the cardinality of $X$ and $Y$ is equal to $2$ is valid obviously.
In the case where the cardinality of $X$ and $Y$ is greater than $2$, take any pair $\{z_1, z_2\} \subset Y \setminus \{y\}$ of distinct points.
Then the intersection $\Phi(\{y,z_1\}) \cap \Phi(\{y,z_2\})$ is a singleton.
Suppose not,
 so since $\Phi$ is an injection,
 $\Phi(\{y,z_1\}) \cap \Phi(\{y,z_2\}) = \emptyset$.
By virtue of Theorem~\ref{ext.}, we can choose $d \in \sphm(X)$ so that $d(\Phi(\{y,z_1\})) = d(\Phi(\{y,z_2\})) = 1/3$ and $d(\Phi(\{z_1,z_2\})) = 1$.
Due to Proposition~\ref{eq.},
\begin{align*}
 1 &= d(\Phi(\{z_1,z_2\})) = T(d)(z_1,z_2) \leq T(d)(y,z_1) + T(d)(y,z_2)\\
 &= d(\Phi(\{y,z_1\})) + d(\Phi(\{y,z_2\})) = 2/3,
\end{align*}
 which is a contradiction.
Thus the doubletons $\Phi(\{y,z_1\})$ and $\Phi(\{y,z_2\})$ intersect at the only point of $X$.
Moreover, by the similar argument, $\bigcap_{z \in Y \setminus \{y\}} \Phi(\{y,z\})$ is also a singleton, see the proof of \cite[Lemma~4.2]{Kos29}.
Hence we can define a map $\phi : Y \to X$ by
 $$\{\phi(y)\} = \bigcap_{z \in Y \setminus \{y\}} \Phi(\{y,z\}).$$
Furthermore, the inverse $\phi^{-1}$ of $\phi$ can be found as follows:
 $$\{\phi^{-1}(x)\} = \bigcap_{z \in X \setminus \{x\}} \Phi^{-1}(\{x,z\})$$
 for every point $x \in X$, refer to the proof of \cite[Proposition~4.3]{Kos29}.

The compatibility of $\Phi$ and $\phi$ follows from the definition immediately.
Using the continuity of pseudometrics in $\sphm(X)$ and $\sphm(Y)$, we can prove that $\phi$ and $\phi^{-1}$ are continuous, refer to the proof of \cite[Main Theorem]{Kos29}.
Consequently, $\phi : Y \to X$ is a homeomorphism.
\end{proof}

\section{Proof of Main Theorem}

Now we are in a position to show Main Theorem and Corollary~\ref{Tingley}.

\begin{proof}[Proof of Main Theorem]
The formula ($\ast$) follows from Propositions~\ref{eq.} and \ref{corresp.} directly.
In the case that the cardinality of $X$ and $Y$ is greater than $2$,
 we can show the uniqueness of $\phi$ easily, see the proof of Main Theorem in \cite{Kos29}.
We complete the proof.
\end{proof}

\begin{proof}[Proof of Corollary~\ref{Tingley}]
Remark that for any metrizable space $Z$, $\sphm(Z)$ is a complete metric space and $\sphm(Z) \cap \adm(Z)$ is dense in $\sphm(Z)$ by Proposition~\ref{dense}.
Hence each isometry $T : \sphm(X) \cap \adm(X) \to \sphm(Y) \cap \adm(Y)$ can be extended to an isometry between $\sphm(X)$ and $\sphm(Y)$.
By virtue of Main Theorem, for every isometry $T : \sphm(X) \to \sphm(Y)$, there exists a homeomorphism $\phi : Y \to X$ such that $T(d)(x,y) = d(\phi(x),\phi(y))$ for all pseudometrics $d \in \sphm(X)$ and all points $x, y \in Y$.
Then using the homeomorphism $\phi$, we can define $\overline{T} : \pseudo(X) \to \pseudo(Y)$ by $\overline{T}(d)(x,y) = d(\phi(x),\phi(y))$.
It is obvious that $\overline{T}$ is an extension of $T$,
 which is real-linear.
The uniqueness of $\overline{T}$ follows form the linearity of $\overline{T}$.
Moreover, by the same argument as the proof of the implication (1) $\Rightarrow$ (2) in \cite[Main Theorem]{Kos29}, $\overline{T}$ is an isometry.
The proof is completed.
\end{proof}

\end{document}